\newtheorem{theorem}{Theorem}[section]
\newtheorem{lemma}[theorem]{Lemma}
\newtheorem{proposition}[theorem]{Proposition}
\newtheorem{corollary}[theorem]{Corollary}
\theoremstyle{definition}
\newtheorem{example}[theorem]{Example}
\newtheorem{question}[theorem]{Question}
\numberwithin{equation}{section}
\newcommand{\vp}{\varphi}
\newcommand{\clb}{\mathcal{B}}
\newcommand{\clh}{\mathcal{H}}
\newcommand{\clk}{\mathcal{K}}
\newcommand{\clq}{\mathcal{Q}}
\newcommand{\D}{\mathbb{D}}
\newcommand{\T}{\mathbb{T}}
\newcommand{\Z}{\mathbb{Z}}
\newcommand{\raro}{\rightarrow}
\begin{document}

\title[Liftings and invariant subspaces]{Liftings and invariant subspaces of Hankel operators}


\author[Sneha B]{Sneha B}
\address{Statistics and Mathematics Unit, Indian Statistical Institute, 8th Mile, Mysore Road, Bangalore, 560059, India}
\email{sneharbkrishnan@gmail.com, rs\textunderscore math2105@isibang.ac.in}

\author[Bala]{Neeru Bala}
\address{IIT(ISM) Dhanbad, Jharkhand, 826004, India}
\email{neerusingh41@gmail.com, neerubala@iitism.ac.in}

\author[Panja]{Samir Panja}
\address{Statistics and Mathematics Unit, Indian Statistical Institute, 8th Mile, Mysore Road, Bangalore, 560059, India}
\email{samirpanja\textunderscore pd@isibang.ac.in, panjasamir2020@gmail.com}

\author[Sarkar]{Jaydeb Sarkar}
\address{Statistics and Mathematics Unit, Indian Statistical Institute, 8th Mile, Mysore Road, Bangalore, 560059, India}
\email{jay@isibang.ac.in, jaydeb@gmail.com}

\subjclass[2020]{30H10, 47B35, 32A35, 47B38, 46E20, 15B05}
\keywords{Hardy space, Hankel operators, model spaces, Hilbert matrix, invariant subspaces, reducing subspaces}

\begin{abstract}
We prove a Hankel-variant commutant lifting theorem. This also uncovers the complete structure of the Beurling-type reducing and invariant subspaces of Hankel operators. Kernel spaces of Hankel operators play a key role in the analysis.
\end{abstract}

\maketitle

\tableofcontents

\section{Introduction}

Sarason's commutant lifting theorem \cite{Sara} is a cornerstone of Hilbert function spaces and has many applications in various fields. This result is powerful as it also stands on a core philosophy. It says that one may lift an operator commuting with a model operator to an operator commuting with the shift operator without altering the norm. Model operators are simply compressions of the shift operator into its co-invariant subspaces.

Now, the commutant of the shift is analytic Toeplitz operators, whereas Hankel operators act as intertwiners between the shift and its adjoint. Then, from the perspective of Sarason's lifting theorem or just general interest, a natural question arises as to whether the intertwiners between a model operator and its adjoint lift to Hankel operators.

Curiously, this question has gone unnoticed, and the purpose of this paper is to address it. On one hand, our approach to proving this lifting theorem is somewhat simpler. On the other hand, even at a very basic level, unlike Sarason's lifting, we observe that the existence of nonzero intertwiners between a model operator and its adjoint is not guaranteed. We point out that the underlying structure of model spaces, which reduces Hankel operators, conceals the existential problem. We solve this reducing (and even invariant) subspace problem and use it to solve the existential problem. Our classification provides concrete examples of both the possible and impossible cases of intertwiners.

Now we delve further into the subject matter of the paper. Let $L^2(\T)$ represent the classical Hilbert space of Lebesgue square integrable functions on $\T = \{z \in \mathbb{C}: |z|=1\}$. Let $L_\vp$ denote the Laurant operator on $L^2(\T)$ that corresponds to the symbol $\vp \in L^\infty(\T)$. Here, $L^\infty(\T)$ is the standard von Neumann algebra of $\mathbb{C}$-valued essentially bounded Lebesgue measurable functions on $\T$. Therefore, we have
\[
L_\vp f = \vp f,
\]
for all $f \in L^2(\T)$. This is a repository for many profound theories that encompass both Toeplitz operators and Hankel operators. The \textit{Toeplitz operator} $T_\vp$ and the \textit{Hankel operator} $H_\vp$ with the symbol $\vp \in L^\infty(\T)$ are defined as
\[
T_\vp = P_+ L_\vp|_{H^2},
\]
and
\[
H_\varphi = P_{+} L_\vp J|_{H^2},
\]
respectively. By $H^2$ we mean the Hardy space of square integrable functions on $\T$ and analytic on $\D = \{z \in \mathbb{C}: |z| < 1\}$. Alternatively, this is the space of all $L^2(\T)$-functions that have zero negative Fourier coefficients. Also, $P_{+}$ denotes the orthogonal projection from $L^2 (\mathbb{T})$ onto $H^2$, and $J: L^2 (\mathbb{T}) \to L^2 (\mathbb{T})$ is the unitary operator defined by
\[
(Jf)(z)=f(\bar{z}),
\]
for all $f\in L^2 (\mathbb{T})$ and $z \in \T$. Note that the special Toeplitz operator $T_z$ is an isometry and is also popularly known as the \textit{shift operator}. By application of the Beurling theorem \cite{Ber}, we know that $T_z^*$-invariant closed subspaces are represented as
\[
\clq_u = H^2\ominus u H^2,
\]
where $u \in H^\infty(\D)$ is an inner function. This is referred to as a \textit{model space}, and the \textit{model operator} is the compression
\[
S_u = P_{\clq_u} T_z|_{\clq_u},
\]
where $P_{\clq_u}$ denotes the orthogonal projection of $H^2$ onto $\clq_u$. Recall that $H^\infty(\D)$ is the Banach algebra of all bounded analytic functions on $\D$, and a function $u \in H^\infty(\D)$ is \textit{inner} if
\[
|u(z)| = 1,
\]
for a.e. $z \in \T$ (in the sense of radial limits). We refer to $uH^2$ as a \textit{Beurling-type subspace} of $H^2$. The commutant of $T_z$ is precisely the Toeplitz operators with symbols from $H^\infty(\D)$. In other words
\[
\{X \in \clb(H^2): T_z X = X T_z\} = \{T_\vp: \vp \in H^\infty(\D)\}.
\]
Throughout the paper, $\clb(\clh)$ refers to the space of all bounded linear operators on a given Hilbert space $\clh$.

It is now desirable to recall Sarason's commutant lifting theorem \cite{Sara}. Let $X \in \clb(\clq_u)$. Then
\[
S_u X = X S_u,
\]
if and only if there exists a function $\vp \in H^\infty(\D)$ such that $\|X\| = \|T_\vp\|$ and
\[
X = P_{\clq_u} T_\vp|_{\clq_u}.
\]
This amounts to the commutativity of the following diagram (when combined with the norm identity $\|X\| = \|T_\vp\|$):
\[
\xymatrix{
H^2 \ar@{->}[rr]^{\displaystyle T_{\vp}} \ar@{<-}[dd]_{\displaystyle i_{\clq_u}}
&& H^2 \ar@{->}[dd]^{\displaystyle P_{\clq_u}}    \\ \\
\clq_u \ar@{->}[rr]_{\displaystyle X} && \clq_u
}
\]
where $i_{\clq_u}: \clq_u \hookrightarrow H^2$ is the canonical isometric inclusion operator.

With this perspective, we now turn to Hankel operators. Following the commutant of $T_z$ described above, we first recall the set of intertwiners between the shift $T_z$ and its adjoint $T_z^*$:
\begin{equation}\label{eqn: Hankel class}
\{X \in \clb(H^2): T_z^* X = X T_z\} = \{H_\vp: \vp \in L^\infty(\T)\}.
\end{equation}
This well-known result is due to Nehari \cite{Neha} (also see \cite[Theorem 2.2.4]{Nik TO} for a proof). When it comes to Hankel operators, this is one of the most important and useful results. This, together with Sarason's lifting theorem, raises the following natural question:

\begin{question}\label{Quest 1}
Given an inner function $u \in H^\infty(\D)$ and an operator $X\in\clb(\clq_u)$, determine whether the condition
\begin{align}\label{Main_id}
S_u^* X= X S_u,
\end{align}
implies that $X$ lifts to a Hankel operator maintaining $X$'s norm. That is, whether there exists a symbol $\varphi\in L^{\infty}(\mathbb{T})$ for which that
\[
X = P_{\clq_u} H_\vp|_{\clq_u},
\]
and
\[
\|X\| = \|H_\vp\|.
\]
\end{question}

Of course, this question is so basic that it arises as a matter of independent interest. The preceding diagram can be used to illustrate the problem in a manner similar to the commutant lifting theorem. However, in this case, $X\in \clb(\clq_u)$ fulfills the intertwining relationship \eqref{Main_id}. In Theorem \ref{thm: lift 1}, we solve the lifting problem for Hankel operators:

\begin{theorem}\label{thm: intro 1}
Let $u \in H^\infty(\D)$ be an inner function and let $X\in\clb(\clq_u)$. Then $S_u^*X=XS_u$ if and only if there exists $\varphi\in L^{\infty}(\mathbb{T})$ such that
\[
H_{\varphi}=\begin{bmatrix}
X&0\\
0&0
\end{bmatrix}.
\]
In particular, if $S_u^*X=XS_u$, then $X=H_{\varphi}|_{\clq_u}$, and $\|H_\varphi\|=\|X\|$.
\end{theorem}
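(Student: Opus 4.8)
The plan is to deduce everything from Nehari's characterization \eqref{eqn: Hankel class} together with the block structure of $T_z$ relative to the orthogonal decomposition $H^2(\D) = \clq_u \oplus u H^2(\D)$. First I would record the matrix forms of $T_z$ and $T_z^*$ with respect to this splitting. Because $u H^2(\D)$ is $T_z$-invariant (indeed $z \cdot u h = u(zh)$ for every $h \in H^2(\D)$), the $(1,2)$-entry $P_{\clq_u} T_z|_{u H^2(\D)}$ vanishes, so that
\[
T_z = \begin{bmatrix} S_z & 0 \\ C & D \end{bmatrix}, \qquad T_z^* = \begin{bmatrix} S_z^* & C^* \\ 0 & D^* \end{bmatrix},
\]
where $S_z = P_{\clq_u} T_z|_{\clq_u}$, $C = P_{u H^2(\D)} T_z|_{\clq_u}$, and $D = T_z|_{u H^2(\D)}$.

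The heart of the matter is the observation that padding $X$ by zeros preserves the Hankel intertwining relation. Setting $Y = \begin{bmatrix} X & 0 \\ 0 & 0 \end{bmatrix}$, a direct block multiplication using the forms above gives
\[
T_z^* Y = \begin{bmatrix} S_z^* X & 0 \\ 0 & 0 \end{bmatrix}, \qquad Y T_z = \begin{bmatrix} X S_z & 0 \\ 0 & 0 \end{bmatrix};
\]
the off-diagonal blocks $C$ and $C^*$ only ever meet the zero rows and columns of $Y$ and hence contribute nothing. Consequently $T_z^* Y = Y T_z$ holds if and only if $S_z^* X = X S_z$.

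With this in hand both implications are immediate. If $S_z^* X = X S_z$, then $T_z^* Y = Y T_z$, so \eqref{eqn: Hankel class} produces $\varphi \in L^\infty(\T)$ with $H_\varphi = Y = \begin{bmatrix} X & 0 \\ 0 & 0\end{bmatrix}$. Conversely, if such a $\varphi$ exists then $H_\varphi$, being a Hankel operator, satisfies $T_z^* H_\varphi = H_\varphi T_z$, and the computation above forces $S_z^* X = X S_z$. The remaining assertions follow at once: restricting $Y$ to $\clq_u$ recovers $X$ with no component in $u H^2(\D)$, so $X = H_\varphi|_{\clq_u}$, and since $Y$ is block-diagonal with a zero summand, $\|H_\varphi\| = \|Y\| = \|X\|$.

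I expect the only genuine point to verify is that no bona fide lifting must be constructed by hand: the entire content reduces to checking that the intertwining relation survives the passage from $X$ to $Y$, which in turn rests solely on the vanishing of the $(1,2)$-block of $T_z$. Once the block-lower-triangularity of $T_z$ is established, the theorem is essentially a restatement of Nehari's theorem.
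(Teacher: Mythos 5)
Your argument is correct and is essentially the paper's own proof in block-matrix notation: the paper encodes the vanishing of the $(1,2)$-block of $T_z$ (equivalently, $T_z^*\clq_u \subseteq \clq_u$) through the identity $P_{\clq_u} T_z^* i_{\clq_u} = T_z^* i_{\clq_u}$ for the inclusion $i_{\clq_u}$, pads $X$ to $i_{\clq_u} X i_{\clq_u}^*$, which is exactly your $Y$, and then invokes Nehari's theorem in the same way. The equivalence, the restriction identity $X=H_\varphi|_{\clq_u}$, and the norm equality are all obtained by the same reasoning, so no further changes are needed.
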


The proof of this theorem relies solely on two lemmas, namely Lemma \ref{prop1} and Lemma \ref{coro1}. The former lemma further uses Nehari's classifications of intertwiners between the shift $T_z$ and its adjoint $T_z^*$, as pointed out in \eqref{eqn: Hankel class}.

Theorem \ref{thm: intro 1} appears to be the solution to the lifting problem proposed in Question \ref{Quest 1} above. However, some thoughts prompt questions about even the existence of a nonzero $X \in \clb(\clq_u)$ satisfying the intertwining condition \eqref{Main_id}. In fact, there is an abundance of nonzero $X$ that fulfills $S_u X = X S_u$ in the case of the commutant lifting theorem (just take $X = S_u$). On the other hand, there isn't an automated fix for this in the Hankel operator situation. That is, given an inner function $u \in H^\infty(\D)$, it is not effortless to identify a nonzero solution $X \in\clb(\clq_u)$ to the identity \eqref{Main_id}. This prompts the question that follows:

\begin{question}
Classify nonconstant inner functions $u \in H^\infty(\D)$ for which there exists a nonzero operator $X \in \clb(\clq_u)$ satisfying the intertwining condition \eqref{Main_id}.
\end{question}

This question is fascinating on its own because it also involves solving operator equations. Theorem \ref{thm: nonzero lift} yields a complete answer to this question:

\begin{theorem}\label{thm: nonzero lift}
There exists a nonzero operator $X\in\clb(\clq_u)$ satisfying $S_u^*X=XS_u$ if and only if
\[
\gcd\{u, \overline{Ju}\}\ne 1.
\]
\end{theorem}

It is relevant to note in the above scenario that
\[
X = H_\vp|_{\clq_u},
\]
provides a nonzero solution for the operator equation $S_u^*X=XS_u$ (see the proof of Theorem \ref{thm: nonzero lift}, and specifically the construction in \eqref{eqn: nontrivial phi}), where
\[
\vp = T_z^* \gcd\{u, \overline{Ju}\}.
\]

The above result is a byproduct of the solution to another natural question about the structure of Beurling-type subspaces, which reduces Hankel operators. Indeed, a closer inspection of Theorem \ref{thm: intro 1} shows that the restrictions of Hankel operators on model spaces that reduce the corresponding Hankel operators provide the possibility of a nonzero solution $X$ satisfying \eqref{Main_id}. In this context, we first classify Beurling-type subspaces that are invariant under Hankel operators:

\begin{theorem}\label{invariant}
Let $u \in H^\infty(\D)$ be a nonconstant inner function, and let $\vp \in L^\infty(\T)$. Then, the following are equivalent:
\begin{enumerate}
\item $u H^2$ is invariant under $H_\varphi$.
\item $u H^2 \subseteq \ker H_\varphi$.
\item $P_+\varphi \in \clq_{\overline{Ju}}$.
\end{enumerate}
\end{theorem}

As a result, in Corollary \ref{reducing}, we present Beurling-type subspaces that reduce Hankel operators: In the setting of Theorem \ref{invariant}, the following are equivalent:
\begin{enumerate}
\item $u H^2$ reduces $H_\varphi$.
\item $u H^2 \subseteq \ker H_\varphi \cap \ker H^*_\varphi$.
\item $P_+\varphi\in \big(\text{gcd} \{u, \overline{Ju}\} H^2\big)^\perp$.
\end{enumerate}

Note that given a function $u = \sum_{n=0}^{\infty} a_n z^n \in H^\infty(\D)$, the function $\overline{Ju} \in H^\infty(\D)$ is given by
\[
\overline{Ju} = \sum_{n=0}^{\infty} \bar{a}_n z^n.
\]

We present examples and counterexamples of lifting, reducing, and invariant subspaces of Hankel operators. For instance, Theorem \ref{thm: nonzero lift} can be applied to a specific model space to verify the presence of nonzero intertwiners. We also have some properties of classical Hankel operators, like the one related to Hilbert's Hankel matrix $\Gamma$, where
\[
\Gamma = \Big\{\frac{1}{m+n+1}\Big\}_{m,n=0}^{\infty}.
\]
In particular, we have the following new information about the Hankel operator $H_\psi$, which corresponds to Hilbert's Hankel matrix:

\begin{enumerate}
\item $H_\psi$ cannot be viewed as a lift to any intertwiners of the model operator and its adjoint (see Example \ref{exam: Hilbert matrix} for more details).
\item Since $H_\psi$ is injective \cite{Mag}, by Theorem \ref{invariant}, it follows that there is no Beurling-type subspace that is invariant under $H_\gamma$. 
\end{enumerate}

We note that the theory of Hankel operators is an evergreen subject that is well-known for its connections to a variety of subjects and for solving problems of general interest. For instance, see Pisier's solution to the similarity problem \cite{Pisier} (also see \cite{AP}). While the monographs \cite{Nik TO, Peller} are excellent source for comprehensive discussion, we suggest reading \cite{Wolf, FTZ, GP, LR, Xia} to get a feel for the work from various perspectives.
Finally, we comment that the paper is self-contained modulo standard classical notions that can be found in the well-known monographs \cite{Foias, Nik TO, Peller, NF}.

The remainder of the paper is divided into four sections. The lifting theorem is presented in the next section, Section \ref{sec: lifting thm}. The lifting result of this section led us to discuss the Beurling-type invariant and reducing subspaces of Hankel operators, which are addressed in Section \ref{sec: red sub}. With these tools in hand, we revisit the lifting problem in Section \ref{sec: revisit lift} and refine the results obtained in Section \ref{sec: lifting thm}. The final section, Section \ref{sec: remark}, discusses possible generalizations and some refinements of the techniques applied in this paper.

\section{A lifting theorem}\label{sec: lifting thm}

The aim of this section is to prove the lifting theorem for operators acting on model spaces and satisfying the intertwining relation between the model operator and its adjoint.

In order to guarantee a proof that is technically rigorous, we shall make modifications to our notation. These alterations will solely be implemented in the proof of the subsequent two results (and also in a part of Section \ref{sec: remark}).


 Consider the canonical embedding
\[
i_{\clq_u} : \clq_u \hookrightarrow H^2.
\]
It is clear that
\[
i_{\clq_u}^* i_{\clq_u} = I_{\clq_u},
\]
and
\[
i_{\clq_u} i_{\clq_u}^* = P_{\clq_u},
\]
where, as usual, $P_{\clq_u} \in \clb(H^2)$ denotes the orthogonal projection of $H^2$ onto $\clq_u$. Using this notation, we can more appropriately define the model operator $S_u$ on the model space $\clq_u$ as follows:
\[
S_u = i_{\clq_u}^* T_z i_{\clq_u}.
\]
Evidently, this is the correct representation of the model operator, instead of following the standard definition of $S_u$ as $S_u = P_{\clq_u} T_z|_{\clq_u}$. Note also that $S_u^* = T_z^*|_{\clq_u}$. The following is a key to our lifting theorem:

\begin{lemma}\label{prop1}
Let $X \in \clb(\clq_u)$. Then $S_u^*X=XS_u$ if and only if there exists $\varphi\in L^{\infty}(\mathbb{T})$ such that
\[
i_{\clq_u} X i_{\clq_u}^* =H_{\varphi}.
\]
\end{lemma}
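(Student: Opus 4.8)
The plan is to transfer the intertwining relation living on the model space to the genuine Hankel intertwining relation on $H^2(\D)$, and then invoke Nehari's theorem \eqref{eqn: Hankel class}. Write $\tilde{X} = i_\clq X i_\clq^* \in \clb(H^2(\D))$; this is the operator that acts as $X$ on $\clq$ and annihilates $\clq^\perp = u H^2(\D)$. Since the Hankel operators are characterized precisely by the relation $T_z^* H_\vp = H_\vp T_z$, the entire statement follows once I establish the equivalence
\[
S_z^* X = X S_z \iff T_z^* \tilde{X} = \tilde{X} T_z.
\]

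The bridge is a pair of intertwining identities between the embedding $i_\clq$ and the two shifts. Because $\clq$ is $T_z^*$-invariant, I have $T_z^* i_\clq = i_\clq S_z^*$. Dually --- and this is the step where the model-space structure genuinely enters --- I claim $i_\clq^* T_z = S_z i_\clq^*$. Indeed, $S_z i_\clq^* = i_\clq^* T_z i_\clq i_\clq^* = i_\clq^* T_z P_\clq$, so the claim reduces to $i_\clq^* T_z (I - P_\clq) = 0$, that is, $T_z$ maps $\clq^\perp = u H^2(\D)$ into $\clq^\perp$. This is exactly the Beurling $T_z$-invariance of $u H^2(\D)$, and I expect it to be the only substantive point; everything else is bookkeeping with $i_\clq^* i_\clq = I_\clq$ and $i_\clq i_\clq^* = P_\clq$.

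With these two identities in hand I compute $T_z^* \tilde{X} = T_z^* i_\clq X i_\clq^* = i_\clq S_z^* X i_\clq^*$ and $\tilde{X} T_z = i_\clq X i_\clq^* T_z = i_\clq X S_z i_\clq^*$. Hence $T_z^* \tilde{X} = \tilde{X} T_z$ is equivalent to $i_\clq (S_z^* X) i_\clq^* = i_\clq (X S_z) i_\clq^*$; multiplying on the left by $i_\clq^*$ and on the right by $i_\clq$ and using $i_\clq^* i_\clq = I_\clq$ recovers $S_z^* X = X S_z$, and each step is reversible. This yields the displayed equivalence.

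Finally I close the loop with Nehari's theorem. If $S_z^* X = X S_z$, then $T_z^* \tilde{X} = \tilde{X} T_z$, so by \eqref{eqn: Hankel class} there is a $\vp \in L^\infty(\T)$ with $\tilde{X} = H_\vp$, that is, $i_\clq X i_\clq^* = H_\vp$. Conversely, if $i_\clq X i_\clq^* = H_\vp$ for some $\vp$, then $\tilde{X} = H_\vp$ satisfies $T_z^* \tilde{X} = \tilde{X} T_z$ by the defining property of Hankel operators, and the equivalence above returns $S_z^* X = X S_z$. Note that the symbol $\vp$ need not be unique, reflecting the usual ambiguity in the Hankel symbol, which is harmless here.
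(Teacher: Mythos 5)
Your proposal is correct and follows essentially the same route as the paper: both reduce the model-space relation to the genuine Hankel relation $T_z^*\tilde{X}=\tilde{X}T_z$ on $H^2(\D)$ via the co-invariance of $\clq$ (the identity $T_z^*i_\clq = i_\clq S_z^*$ and its adjoint, which is the paper's \eqref{eqn: PQ Tz}) and then invoke Nehari's characterization \eqref{eqn: Hankel class}. Your packaging of the computation around the two intertwining identities is a slightly tidier bookkeeping of the same argument, and the reversibility observation correctly covers the converse, which the paper handles by a separate direct computation.
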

\begin{proof}
Suppose $S_u^*X=XS_u$. We claim that
\[
T_z^* (i_{\clq_u} X i_{\clq_u}^*) = (i_{\clq_u} X i_{\clq_u}^*) T_z.
\]
Indeed, $S_u^*X=XS_u$ implies that $i_{\clq_u}^* (T_z^* i_{\clq_u}) X = X (i_{\clq_u}^* T_z i_{\clq_u})$. Since $i_{\clq_u} i_{\clq_u}^* = P_{\clq_u}$, we have
\[
P_{\clq_u} T_z^* i_{\clq_u} X = (i_{\clq_u} X i_{\clq_u}^*) T_z i_{\clq_u}.
\]
But since
\begin{equation}\label{eqn: PQ Tz}
P_{\clq_u} T_z^* i_{\clq_u} = T_z^* i_{\clq_u},
\end{equation}
(as $T_z^* \clq_u \subseteq \clq_u$) we must have
\[
T_z^* i_{\clq_u} X = (i_{\clq_u} X i_{\clq_u}^*) T_z i_{\clq_u}.
\]
Multiplying both sides by $i_{\clq_u}^*$ on the right this implies
\[
T_z^* (i_{\clq_u} X i_{\clq_u}^*) = (i_{\clq_u} X i_{\clq_u}^*) T_z P_{\clq_u},
\]
which, in view of the fact that $i_{\clq_u}^* T_z P_{\clq_u} = i_{\clq_u}^* T_z$ (see \eqref{eqn: PQ Tz} above), finally yields the claim. Therefore, by \eqref{eqn: Hankel class}, there exists $\varphi\in L^{\infty}(\mathbb{T})$ such that $i_{\clq_u} X i_{\clq_u}^* = H_{\varphi}$.

\noindent For the converse direction, assume that there exists $\varphi\in L^{\infty}(\mathbb{T})$ such that $i_{\clq_u} X i_{\clq_u}^* = H_{\varphi}$. We compute
\[
\begin{split}
i_{\clq_u}(S_u^* X - X S_u) i_{\clq_u}^* & = ((i_{\clq_u} i_{\clq_u}^*) T_z^* i_{\clq_u}) (X i_{\clq_u}^*) - i_{\clq_u} X (i_{\clq_u}^* T_z (i_{\clq_u} i_{\clq_u}^*))
\\
& = P_{\clq_u} T_z^* i_{\clq_u} (X i_{\clq_u}^*) - i_{\clq_u} X (i_{\clq_u}^* T_z P_{\clq_u})
\\
& = T_z^* i_{\clq_u} X i_{\clq_u}^* - i_{\clq_u} X i_{\clq_u}^* T_z
\\
& = T_z^* H_\vp - H_\vp T_z
\\
& = 0,
\end{split}
\]
as $H_\vp$ is a Hankel operator (see \eqref{eqn: Hankel class}). Using the fact that $i_{\clq_u}$ is an isometry, we ultimately arrive at the conclusion that $S_u^* X = X S_u$. This completes the proof of the lemma.
\end{proof}

We also have the following elementary observation, which will be used as a tool in the proof of our lifting theorem.

\begin{lemma}\label{coro1}
Let $X \in \clb(\clq_u)$ and let $\vp \in L^\infty(\T)$. Then
\[
i_{\clq_u} X i_{\clq_u}^* =H_{\varphi},
\]
if and only if
\[
H_{\varphi}=\begin{bmatrix}
X&0\\
0&0
\end{bmatrix},
\]
with respect to the orthogonal decomposition $H^2=\clq_u\oplus\clq_u^{\perp}$.
\end{lemma}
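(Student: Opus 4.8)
The plan is to notice that the equivalence asserted by the lemma has nothing to do with the Hankel structure of $H_\varphi$: the operator $i_{\clq} X i_{\clq}^*$ \emph{always} has the stated block form, whether or not it happens to equal a Hankel operator. Thus the entire content reduces to a single elementary computation, establishing that, with respect to $H^2(\D) = \clq \oplus \clq^{\perp}$,
\[
i_{\clq} X i_{\clq}^* = \begin{bmatrix} X & 0 \\ 0 & 0 \end{bmatrix}.
\]
Once this identity is in hand, the two equalities in the statement are literally the same equation (each asserts that $H_\varphi$ coincides with this one fixed operator), so the biconditional is immediate and requires no further argument.

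To carry out the computation I would introduce the canonical inclusion $i_{\clq^{\perp}} : \clq^{\perp} \hookrightarrow H^2(\D)$ of the complementary summand and recall that the four blocks of any $A \in \clb(H^2(\D))$ are $i_{\clq}^* A i_{\clq}$, $i_{\clq}^* A i_{\clq^{\perp}}$, $i_{\clq^{\perp}}^* A i_{\clq}$, and $i_{\clq^{\perp}}^* A i_{\clq^{\perp}}$. Setting $A = i_{\clq} X i_{\clq}^*$ and simplifying each block with the relations already recorded in the section, namely $i_{\clq}^* i_{\clq} = I_\clq$ together with the orthogonality identities $i_{\clq}^* i_{\clq^{\perp}} = 0$ and $i_{\clq^{\perp}}^* i_{\clq} = 0$ (which hold because $i_{\clq}^*$ is the orthogonal projection onto $\clq$ and $\clq \perp \clq^{\perp}$), the $(1,1)$ block collapses to $(i_{\clq}^* i_{\clq}) X (i_{\clq}^* i_{\clq}) = X$, while each of the other three blocks carries a factor $i_{\clq}^* i_{\clq^{\perp}}$ or $i_{\clq^{\perp}}^* i_{\clq}$ and hence vanishes. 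This produces exactly the displayed matrix.

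The only point I would take care over, and the nearest thing to an obstacle, is purely bookkeeping with the adjoints and inclusions: one must consistently read $i_{\clq}^*$ as the projection of $H^2(\D)$ onto $\clq$ (landing in $\clq$) and confirm that it annihilates $\clq^{\perp}$, so that the cross terms genuinely drop out. No analytic input is needed, and no appeal to \eqref{eqn: Hankel class} or any Hankel-specific property enters; the substance is simply that $i_{\clq} X i_{\clq}^*$ is supported on $\clq$ and acts there as $X$. I therefore anticipate no genuine difficulty beyond getting this block calculation cleanly on the page.
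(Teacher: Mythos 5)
Your proposal is correct and follows essentially the same route as the paper: both reduce the lemma to the single identity $i_{\clq} X i_{\clq}^* = \begin{bmatrix} X & 0 \\ 0 & 0 \end{bmatrix}$, which the paper obtains by writing $i_{\clq}$ as the column operator $\begin{bmatrix} I_{\clq} \\ 0 \end{bmatrix}$ and multiplying out, while you verify the four blocks individually. Your explicit remark that no Hankel-specific input is used is accurate and consistent with the paper's argument.
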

\begin{proof}
We write $i_{\clq_u}: \clq_u \raro H^2 =\clq_u\oplus\clq_u^{\perp}$ as the column operator
\[
i_{\clq_u} = \begin{bmatrix}
I_{\clq_u}
\\
0
\end{bmatrix}.
\]
Then
\[
i_{\clq_u} X i_{\clq_u}^* = \begin{bmatrix}
I_{\clq_u}
\\
0
\end{bmatrix}
X \begin{bmatrix}
I_{\clq_u} & 0
\end{bmatrix},
\]
and hence
\begin{equation}\label{eqn: iQ X}
i_{\clq_u} X i_{\clq_u}^* = \begin{bmatrix}
X&0\\
0&0
\end{bmatrix}.
\end{equation}
The lemma is now easily derived from this identity.
\end{proof}

From the matrix representation of $H_\varphi$ in the above statement, it is evident that $\|H_\varphi\|=\|X\|$. As a result, Lemmas \ref{prop1} and \ref{coro1} immediately yield the lifting theorem for Hankel operators, as also stated in Theorem \ref{thm: intro 1} earlier.
 
 

\begin{theorem}\label{thm: lift 1}
Let $\clq_u$ be a model space and let $X\in\clb(\clq_u)$. Then $S_u^*X=XS_u$ if and only if there exists $\varphi\in L^{\infty}(\mathbb{T})$ such that
\[
H_{\varphi}=\begin{bmatrix}
X&0\\
0&0
\end{bmatrix}.
\]
In particular, if $S_u^*X=XS_u$, then we have the following:
\[
X=H_{\varphi}|_{\clq_u},
\]
and
\[
\|H_\varphi\|=\|X\|.
\]
\end{theorem}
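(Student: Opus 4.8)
The plan is to compose the two preceding lemmas directly; no fresh computation is needed. First I would invoke Lemma \ref{prop1}, which already establishes that the intertwining relation $S_z^* X = X S_z$ is equivalent to the existence of a symbol $\varphi \in L^\infty(\T)$ for which $i_\clq X i_\clq^* = H_\varphi$. Then I would feed that conclusion into Lemma \ref{coro1}, which translates the operator identity $i_\clq X i_\clq^* = H_\varphi$ into the block form $H_\varphi = \left[\begin{smallmatrix} X & 0 \\ 0 & 0 \end{smallmatrix}\right]$ relative to the orthogonal decomposition $H^2(\D) = \clq \oplus \clq^\perp$. Chaining these two equivalences yields precisely the stated biconditional: $S_z^* X = X S_z$ holds if and only if there is a $\varphi \in L^\infty(\T)$ realizing the block form. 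Care should be taken only to note that both lemmas are genuine equivalences, so the composition preserves each implication in both directions.

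For the ``in particular'' clause, I would assume $S_z^* X = X S_z$ and fix the symbol $\varphi$ produced above. Reading off the first column of the block matrix $H_\varphi = \left[\begin{smallmatrix} X & 0 \\ 0 & 0 \end{smallmatrix}\right]$ shows that $H_\varphi$ maps $\clq$ into $\clq$ and acts there as $X$, so that $X = H_\varphi|_\clq$. The norm identity is equally immediate from the block structure: a block-diagonal operator has norm equal to the maximum of the norms of its diagonal blocks, and here those blocks are $X$ and $0$, whence $\|H_\varphi\| = \max\{\|X\|, 0\} = \|X\|$. This last point is in any case already flagged in the paragraph preceding the statement.

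I do not anticipate any genuine obstacle. All of the analytic substance---the use of Nehari's characterization \eqref{eqn: Hankel class} of the intertwiners of $T_z$ with $T_z^*$, together with the bookkeeping for the embedding $i_\clq$, its adjoint, and the identity $i_\clq i_\clq^* = P_\clq$---has been fully discharged in Lemmas \ref{prop1} and \ref{coro1}. The theorem is therefore a clean corollary, and the only mild care worth exercising is ensuring that the block form is interpreted with respect to the correct decomposition $H^2(\D) = \clq \oplus \clq^\perp$ so that the restriction and norm consequences read off without ambiguity.
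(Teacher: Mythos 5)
Your proposal is correct and follows exactly the paper's own route: the theorem is stated there as an immediate consequence of Lemmas \ref{prop1} and \ref{coro1}, with the norm identity $\|H_\varphi\|=\|X\|$ read off from the block-diagonal form just as you describe. Nothing is missing.
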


This seemingly resolves the lifting theorem within the context of Hankel operators, providing a more direct method compared to the commutant lifting theorem \cite{Sara}. Nevertheless, this approach may obscure the underlying issue that does not persist in the commutant lifting theorem. We are essentially talking about the existence of nonzero intertwiners. In the case of commutant lifting, there is abundance of operators acting on model space that commute with the model operator. However, the question of the existence of nonzero operators that act on a model space and intertwine between the model operator and its adjoint does not appear to have a direct solution.

This suggests the above lifting result is incomplete, and in the following sections, we fill this gap. For instance, in Section \ref{sec: revisit lift}, we will present quantitative classifications of nonzero intertwiners.

\section{Invariant and reducing subspaces}\label{sec: red sub}

Theorem \ref{thm: lift 1} makes it clear that one needs to understand the structures of model spaces that reduce a given Hankel operator. The purpose of this section is to address that issue. In fact, we go a bit further: we first classify Beurling-type invariant subspaces of Hankel operators. As a consequence, this result would imply a classification of model spaces that reduces Hankel operators. Evidently, the invariant subspace theorem that follows this section is of independent interest.

We proceed to the proof of Theorem \ref{invariant}. Recall the unitary operator $J: H^2 \raro \overline{H^2}$ defined by $(Jf)(z) = f(\bar{z})$ for all $f \in H^2$ and $z \in \D$.

\bigskip

\noindent\textsf{Proof of Theorem \ref{invariant}:} To begin, we show that (1) $\Leftrightarrow$ (2). Assume that $H_\vp (u H^2) \subseteq u H^2$. Define a closed subspace $\clq \subseteq H^2$ as
\[
\clq = \overline{\text{span}} \{H_\vp (uf): f \in H^2\}.
\]
Clearly, $\clq \subseteq u H^2$. We claim that $\clq$ is a model space, that is, $\clq$ is $T_z^*$-invariant. To prove this claim, fix a function $g\in \clq$. Then, for every $n \in \mathbb{N}$, there exists a function $f_n\in H^2$ such that
\[
\|H_\varphi(uf_n)-g\|<\frac{1}{n}.
\]
Now, for any $f \in H^2$, we have
\[
\begin{split}
T_z^* (H_\varphi(uf) - g) & = T_z^*H_\varphi(uf) - T_z^*g
\\
& = H_\varphi T_z(uf) - T_z^* g
\\
& = H_\varphi(uzf) - T_z^*g.
\end{split}
\]
In the above, we have applied the Hankel property that $T_z^*H_\varphi = H_\vp T_z$. Since $T^*_z$ is a contraction, we have
\[
\|T_z^* H_\varphi(uf) - T_z^*g\|\leq \|H_\varphi(uf)-g\|.
\]
Thus, for all $n \in \mathbb{N}$, we have
\[
\|H_\varphi(uzf_n) - T_z^*g\| \leq \|H_\varphi(uf_n)-g\| <\frac{1}{n},
\]
which proves the claim that $\clq$ is a model space. Consequently, there is an inner function $v \in H^\infty(\D)$ such that
\[
\clq = \clq_v.
\]
Now we find ourselves in a situation where a model space is included in a Beurling-type invariant subspace, that is
\[
\clq_v \subseteq u H^2.
\]
We claim that this leads to triviality. Indeed, the above inclusion implies $z^n\clq_v \subseteq u H^2$ for all $n \in \Z_+$, and hence
\[
{\overline{\text{span}}} \{z^n \clq_v: n \in \Z_+\} \subseteq u H^2.
\]
But then, since ${\overline{\text{span}}} \{z^n \clq_v: n \in \Z_+\}$ reduces $T_z$ (recall that $T_z$ is an irreducible operator), we have
\[
{\overline{\text{span}}} \{z^n \clq_v: n \in \Z_+\} = H^2 \text{ or } \{0\}.
\]
If the above is equal to $H^2$, then $u H^2=H^2$, a contradiction to the fact that $u$ is a nonconstant inner function. It follows that
\[
\clq_v = \clq = \{0\},
\]
which implies $uH^2 \subseteq \ker H_\vp$. This completes the proof of the forward direction. The reverse direction is straightforward.

\noindent For (2) $\Rightarrow$ (3), assume that $uH^2 \subseteq \ker H_\vp$. In particular, $H_{\varphi} u = 0$. From the definition of $H_\varphi$, we find
\[
H_{\varphi}u = P_+ \varphi Ju=0.
\]
This readily implies that
\[
\varphi Ju \in \overline{zH^2}.
\]
Equivalently
\[
\langle\varphi Ju,f\rangle_{L^2(\T)} =0,
\]
for all $f\in H^2$, and hence
\[
\langle P_+\varphi,\overline{Ju}f\rangle_{H^2} = 0,
\]
for all $f\in H^2$. Therefore
\[
P_+\varphi \in \clq_{\overline{Ju}}.
\]
This proves that (2) $\Rightarrow$ (3). Finally, assume that the above property holds. Then, as above, $\langle \varphi Ju, f\rangle_{L^2(\T)} = 0$ for all $f \in H^2$, which implies $\varphi Ju \in \overline{zH^2}$. Since $\vp J u \in L^\infty(\T)$, we conclude that
\[
\varphi Ju \in \overline{zH^{\infty}(\mathbb{D})}.
\]
This gives
\[
H_{\varphi}uf = P_+\varphi((Ju)(Jf))=0,
\]
for $f\in H^2$, and hence $u H^2\subseteq \ker H_{\varphi}$. This completes the proof of Theorem \ref{invariant}. \qed

\bigskip

In particular, if $\ker H_\vp = \{0\}$, then except for $H^2$, none of the Beurling-type subspaces can be invariant under $H_\vp$. We illustrate this with a common example. Consider the classical Hilbert's Hankel matrix $\Gamma$ defined by \cite[page 191]{Young}
\[
\Gamma = \begin{bmatrix}
1 & \frac{1}{2} & \frac{1}{3} & \cdots
\\
\frac{1}{2} & \frac{1}{3} & \frac{1}{4} & \cdots
\\
\frac{1}{3} & \frac{1}{4} & \frac{1}{5} & \cdots
\\
\cdot & \cdot & \cdot & \cdots
\end{bmatrix}.
\]
Let us denote $H_\psi$ the Hankel operator on $H^2$ corresponding to Hilbert's Hankel matrix $\Gamma$, where $\psi$ is a symbol defined by
\[
\psi(e^{it})=ie^{-it}(\pi-t),
\]
for $t\in \T$. It is well-known that $H_\psi$ has trivial kernel \cite{Mag}. As a result, we have the following corollary concerning the lattice of the invariant subspaces of Hilbert's Hankel matrix:

\begin{corollary}\label{cor Hilbert matrix}
Let $H_\psi$ denote the Hankel operator on $H^2$ corresponding to Hilbert's Hankel matrix $\Gamma$. Then
\[
\{u H^2: u \in H^\infty(\D) \text{ inner}\} \cap \text{Lat} H_\psi = \{H^2\}.
\]
\end{corollary}

This result stands in contrast to composition operators, since there is always a Beurling-type subspace that is invariant under a given composition operator (see Matache \cite{Matache}, and also see \cite{Bose}).

Now we turn to model spaces that reduce Hankel operators. We need to recall some basics about inner function arithmetic. Note that an inner function $u \in H^\infty(\D)$ is said to be the \textit{greatest common divisor} of a pair of inner functions $u_1, u_2 \in H^\infty(\D)$ if
\begin{enumerate}
\item  $u$ divides both $u_1$ and $u_2$, and
\item if an inner function $u_3 \in H^\infty(\D)$ divides both $u_1$ and $u_2$, then $u_3$ also divides $u$.
\end{enumerate}
In the above case, we simply write
\[
u = \text{gcd}\{u_1, u_2\}.
\]
Furthermore, when we say that an inner function $u \in H^\infty(\D)$ divides another inner function $v \in H^\infty(\D)$, we mean that there is a function $w \in H^\infty(\D)$ (which will be forced to be an inner function anyway) such that
\[
v = wu.
\]

Consider a pair of inner functions $u_1, u_2 \in H^\infty(\D)$. Among the model spaces $\clq_{u_1}$ and $\clq_{u_2}$, the following identity exists:
\[
(\text{gcd}\{u_1, u_2\} H^2)^\perp = \clq_{\text{gcd}\{u_1, u_2\}} = \clq_{u_1} \cap \clq_{u_2}.
\]
Indeed, $f \in (\text{gcd}\{u_1, u_2\} H^2)^\perp$ if and only if
\[
\langle f, \text{gcd}\{u_1, u_2\} g \rangle_{H^2} = 0,
\]
for all $g \in H^2$. This is now equivalent to (in view of the definition of gcd)
\[
\langle f, u_1 g \rangle_{H^2} = 0 = \langle f, u_2 g \rangle_{H^2},
\]
for all $g \in H^2$. Equivalently, we have
\[
f \in \clq_{u_1} \cap \clq_{u_2},
\]
which completes the proof of the well-known claim.

Now we are ready for the complete classification of model spaces that reduce Hankel operators.

\begin{corollary}\label{reducing}
Let $u \in H^\infty(\D)$ be a nonconstant inner function, and let $\vp \in L^\infty(\T)$. Then, the following are equivalent:
\begin{enumerate}
\item $u H^2$ reduces $H_\varphi$.
\item $u H^2 \subseteq \ker H_\varphi \cap \ker H^*_\varphi$.
\item $P_+\varphi\in \big(\text{gcd} \{u, \overline{Ju}\} H^2\big)^\perp$.
\end{enumerate}
\end{corollary}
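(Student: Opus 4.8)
The plan is to reduce the entire statement to Theorem \ref{invariant} by exploiting the fact that the adjoint of a Hankel operator is again a Hankel operator. The first step I would record is the identity $H_\varphi^* = H_{J\bar\varphi}$. For $f, g \in H^2(\D)$, a direct computation using that $P_+$ is self-adjoint, that $J$ is a self-adjoint unitary, and that $J$ is multiplicative gives
\[
\langle H_\varphi f, g\rangle = \langle \varphi Jf, g\rangle = \langle Jf, \bar\varphi g\rangle = \langle f, J(\bar\varphi g)\rangle = \langle f, P_+((J\bar\varphi)(Jg))\rangle,
\]
so that $H_\varphi^* = H_{J\bar\varphi}$. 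In particular $H_\varphi^*$ is itself a Hankel operator, and Theorem \ref{invariant} applies to it verbatim.

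For the equivalence (1) $\Leftrightarrow$ (2), I would use that $uH^2(\D)$ reduces $H_\varphi$ precisely when it is invariant under both $H_\varphi$ and $H_\varphi^*$. Applying the equivalence (1) $\Leftrightarrow$ (2) of Theorem \ref{invariant} to $H_\varphi$ shows invariance under $H_\varphi$ is the same as $uH^2(\D) \subseteq \ker H_\varphi$; applying it to the Hankel operator $H_\varphi^* = H_{J\bar\varphi}$ shows invariance under $H_\varphi^*$ is the same as $uH^2(\D) \subseteq \ker H_\varphi^*$. Intersecting the two conditions yields (2).

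For (2) $\Leftrightarrow$ (3), Theorem \ref{invariant}(3) gives $uH^2(\D) \subseteq \ker H_\varphi \Leftrightarrow P_+\varphi \in \clq_{\overline{Ju}}$, while the same theorem applied to the symbol $J\bar\varphi$ gives $uH^2(\D) \subseteq \ker H_\varphi^* \Leftrightarrow (J\bar\varphi)(Ju) \in \overline{zH^2(\D)}$. The crux is to rewrite this second condition as $P_+\varphi \in \clq_u$: since $J$ is multiplicative and commutes with conjugation, $(J\bar\varphi)(Ju) = J(\bar\varphi u)$, and since $J$ interchanges $zH^2(\D)$ and $\overline{zH^2(\D)}$, the membership $J(\bar\varphi u) \in \overline{zH^2(\D)}$ is equivalent to $\bar\varphi u \in zH^2(\D)$, i.e. $\varphi \perp uH^2(\D)$, which is exactly $P_+\varphi \in \clq_u$. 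Hence (2) holds iff $P_+\varphi \in \clq_u \cap \clq_{\overline{Ju}}$. Invoking the identity $(\gcd\{u, \overline{Ju}\} H^2(\D))^\perp = \clq_u \cap \clq_{\overline{Ju}}$ established just before the corollary (noting that $\overline{Ju}$ is again inner, since $|\overline{Ju}| = |u(\bar z)| = 1$ a.e. on $\T$) converts this into condition (3).

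The two applications of Theorem \ref{invariant} and the gcd identity are routine. The main obstacle, and really the only place genuine computation is needed, is the symmetry step identifying $uH^2(\D) \subseteq \ker H_\varphi^*$ with $P_+\varphi \in \clq_u$ rather than with $\clq_{\overline{Ju}}$ again. It is precisely this asymmetry between the $H_\varphi$ and $H_\varphi^*$ conditions that forces the two distinct model spaces $\clq_u$ and $\clq_{\overline{Ju}}$ to appear, and hence produces the greatest common divisor in (3); care is needed in tracking the $J$-manipulations so that the inner function lands on $u$ and not on $\overline{Ju}$.
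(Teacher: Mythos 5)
Your proposal is correct and follows essentially the same route as the paper: both reduce the corollary to Theorem \ref{invariant} via the identity $H_\varphi^* = H_{\overline{J\varphi}}$ (your $H_{J\bar\varphi}$ is the same symbol), and both convert the condition $uH^2(\D)\subseteq\ker H_\varphi^*$ into $P_+\varphi\in\clq_u$ by the same $J$-and-conjugation manipulation before invoking the gcd identity $\clq_u\cap\clq_{\overline{Ju}}=(\gcd\{u,\overline{Ju}\}H^2(\D))^\perp$. The only cosmetic difference is that you verify the adjoint formula directly rather than citing Peller.
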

\begin{proof}
If $u H^2$ reduces $H_\varphi$, then, in particular, $u H^2$ remains invariant under both $H_\vp$ and $H_\vp^*$. Since $H_\vp^*$ is also a Hankel operator, (2) simply follows from Theorem \ref{invariant}. The reverse direction (2) $\Rightarrow$ (1) is easy: $u H^2 \subseteq \ker H_\varphi \cap \ker H^*_\varphi$ readily implies, as a general fact, that $u H^2$ (and hence $\clq_{u}$) reduces $H_\varphi$. This proves that (1) and (2) are equivalent.

\noindent For (2) $\Rightarrow$ (3), assume that $u H^2 \subseteq \ker H_\varphi \cap \ker H^*_\varphi$. Then $u H^2 \subseteq \ker H_\varphi$, together with Theorem \ref{invariant}, gives that
\[
P_+\varphi \in \clq_{\overline{Ju}}.
\]
On the other hand, $u H^2 \subseteq \ker H_{\varphi}^*$ implies $H_{\varphi}^* u = 0$. In view of $H_{\varphi}^* = H_{\overline{J\varphi}}$ \cite[Page 432]{Peller}, we have $H_{\overline{J\varphi}} u = 0$, and hence
\[
P_+\overline{J\varphi}Ju=0.
\]
This implies $\overline{J\varphi}Ju\in\overline{zH^2}$. Since $\overline{J\varphi}Ju \in L^\infty(\T)$, it readily follows that
\[
\overline{J\varphi}Ju\in\overline{zH^{\infty}(\mathbb{D})}.
\]
There exists $h\in \overline{H^{\infty}(\mathbb{D})}$ such that
\[
\overline{J\varphi}Ju=\bar{z}h.
\]
We take the conjugate of each side first, and then apply $J$ to either side to get
\[
\varphi\bar{u}=\bar{z}\overline{Jh}\in\overline{zH^{\infty}(\mathbb{D})}.
\]
Since $\varphi\bar{u}\in\overline{zH^{\infty}(\D)}$, for all $f\in H^2$, we have $\langle\varphi\bar{u}, f \rangle_{L^2(\T)} = 0$. Equivalently
\[
\langle P_+\varphi, u f \rangle_{H^2} = 0.
\]
This implies $P_+\varphi \in \clq_u$, and hence
\[
P_+ \varphi \in \clq_u \cap \clq_{\overline{Ju}} = \big(\gcd\{u,\overline{Ju}\}H^2\big)^{\perp}.
\]
Finally, for (3) $\Rightarrow$ (2), assume that $P_+ \varphi \in \big(\gcd\{u,\overline{Ju}\}H^2\big)^{\perp} = \clq_u \cap \clq_{\overline{Ju}}$. Theorem \ref{invariant} and the fact that $H_{\varphi}^* = H_{\overline{J\varphi}}$ immediately imply that $u H^2 \subseteq \ker H_\varphi \cap \ker H_\vp^*$.
\end{proof}

In the context of Theorem \ref{invariant}, it is worth mentioning that the condition $u H^2 \subseteq \ker H_\vp$ can be expressed equivalently as
\[
H_\vp T_u = 0.
\]
Within the same framework, Douglas' range inclusion theorem states that $u H^2$ is invariant under $H_\vp$ if and only if
\[
H_\vp T_u = T_u Y,
\]
for some $Y \in \clb(H^2)$. It is also curious to note that in this case, $H_\vp T_u$ is a Hankel operator. Similarly, Corollary \ref{reducing} implies that $u H^2$ reduces $H_\vp$ if and only if $H_\vp T_u = 0$ and $H_\vp^* T_u = 0$.

We conclude this section with two more invariant subspace results. Recall that Theorem \ref{invariant} provides a complete classification of Beurling-type subspaces invariant under Hankel operators. Using the same result, we can also classify model spaces invariant under Hankel operators. Indeed, recall the identity \cite[Page 432]{Peller}
\[
H_{\varphi}^* = H_{\overline{J\varphi}}
\]
for all $\vp \in L^\infty(\T)$, used in the previous corollary. Let $u \in H^\infty(\D)$ be a nonconstant inner function. Given that $uH^2$ is invariant under $H_\vp$ if and only if $\clq_u$ is invariant under $H_\vp^*$, the following is immediately implied by the above adjoint formula and Theorem \ref{invariant}:

\begin{corollary}\label{model invariant}
Let $u \in H^\infty(\D)$ be a nonconstant inner function, and let $\vp \in L^\infty(\T)$. Then, the following are equivalent:
\begin{enumerate}
\item $\clq_u$ is invariant under $H_\varphi$.
\item $u H^2 \subseteq \ker H_{\overline{J\varphi}}$.
\item $P_+\varphi \in \clq_{u}$.
\end{enumerate}
\end{corollary}
 
 

Finally, we return to the Hankel operator $H_\psi$ on $H^2$ corresponding to Hilbert's Hankel matrix $\Gamma$. Note that $H_\psi$ is a self-adjoint operator, and consequently, Corollary \ref{cor Hilbert matrix} implies:

\begin{corollary}\label{model cor Hilbert matrix}
Let $H_\psi$ denote the Hankel operator on $H^2$ corresponding to Hilbert's Hankel matrix $\Gamma$. Then
\[
\{\clq_u : u \in H^\infty(\D) \text{ inner}\} \cap \text{Lat} H_\psi = \{\{0\}\}.
\]
\end{corollary}

In the above, one needs to use the fact that the model spaces are always proper subspaces of $H^2$.
 

\section{Revisiting liftings and examples}\label{sec: revisit lift}

Let us now go back to the lifting problem in Section \ref{sec: lifting thm}. At that point, the primary question that remained unanswered was the existence of intertwiners between a model operator and its adjoint. The representation of the lift was an additional concern.

We recall from Theorem \ref{thm: lift 1} that if the given intertwiner $X$ on the model space $\clq_u$ admits a lift to $H_\vp$, then $\clq_u$ reduces $H_\vp$. Given Corollary \ref{reducing}, which classifies model spaces that reduce Hankel operators, we can now relate the model space's inner functions to the Hankel operator symbols. In other words, in the following theorem, we directly relate $u$ and $\vp$:

\begin{theorem}\label{X phi}
Let $u \in H^\infty(\D)$ be an inner function. Then there exists $X \in \clb(\clq_u)$ such that
\[
S_u^*X=XS_u,
\]
if and only if there exists $\varphi\in L^{\infty}(\mathbb{T})$ such that
\[
P_+\varphi \in \big(\gcd\{u,\overline{Ju}\}H^2\big)^{\perp}.
\]
Moreover, in this case, $\clq_u$ reduces $H_\vp$, and $X=H_\varphi|_{\clq_u}$.
\end{theorem}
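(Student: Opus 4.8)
The plan is to obtain Theorem \ref{X phi} by stitching together the lifting normal form of Theorem \ref{thm: lift 1} with the reducing-subspace dictionary of Corollary \ref{reducing}. The conceptual hinge is that the block decomposition $H_\varphi = \bigl[\begin{smallmatrix} X & 0 \\ 0 & 0 \end{smallmatrix}\bigr]$ appearing in Theorem \ref{thm: lift 1}, read relative to $H^2(\D) = \clq_u \oplus \clq_u^\perp$ with $\clq_u^\perp = u H^2(\D)$, encodes two facts simultaneously: the decomposition being block diagonal says $u H^2(\D)$ reduces $H_\varphi$, while the vanishing lower-right corner says $u H^2(\D) \subseteq \ker H_\varphi$. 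These are precisely conditions (1) and (2) of Corollary \ref{reducing}. I would dispose of the degenerate case of constant $u$ at the outset (then $\clq_u = \{0\}$, and both sides hold trivially with $X = 0$ and $P_+\varphi = 0$), assuming henceforth that $u$ is nonconstant so that Corollary \ref{reducing} applies.

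For the forward implication, I would take $X \in \clb(\clq_u)$ with $S_z^* X = X S_z$ and feed it into Theorem \ref{thm: lift 1} to obtain $\varphi \in L^\infty(\T)$ with $H_\varphi = \bigl[\begin{smallmatrix} X & 0 \\ 0 & 0 \end{smallmatrix}\bigr]$ relative to $H^2(\D) = \clq_u \oplus u H^2(\D)$. Reading off the matrix, both summands are $H_\varphi$-invariant and $H_\varphi$ annihilates $u H^2(\D)$, so $u H^2(\D)$ reduces $H_\varphi$ and $u H^2(\D) \subseteq \ker H_\varphi \cap \ker H_\varphi^*$; the implication (2) $\Rightarrow$ (3) of Corollary \ref{reducing} then yields $P_+\varphi \in (\gcd\{u, \overline{Ju}\} H^2(\D))^\perp$. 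The representation $X = H_\varphi|_{\clq_u}$ is read straight off the top-left block.

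For the converse, I would start from $\varphi \in L^\infty(\T)$ with $P_+\varphi \in (\gcd\{u, \overline{Ju}\} H^2(\D))^\perp$ and apply the implication (3) $\Rightarrow$ (1),(2) of Corollary \ref{reducing}, giving that $u H^2(\D)$ reduces $H_\varphi$ and $u H^2(\D) \subseteq \ker H_\varphi$. Since $\clq_u^\perp = u H^2(\D)$ is reducing, its orthogonal complement $\clq_u$ is reducing too, so $H_\varphi$ is block diagonal; and since $H_\varphi$ kills $u H^2(\D)$, the lower-right block vanishes, giving $H_\varphi = \bigl[\begin{smallmatrix} X & 0 \\ 0 & 0 \end{smallmatrix}\bigr]$ with $X := H_\varphi|_{\clq_u}$. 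Lemma \ref{coro1} then identifies this normal form with $i_{\clq_u} X i_{\clq_u}^* = H_\varphi$, and Lemma \ref{prop1} converts the latter back into $S_z^* X = X S_z$. This simultaneously establishes the ``moreover'' clause.

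Since the substantive analysis was already carried out in Theorem \ref{thm: lift 1} and Corollary \ref{reducing}, I do not anticipate a genuine obstacle here; the proof is essentially an exercise in translating between two equivalent descriptions. The one point I would handle carefully is the passage, in the converse direction, from ``$u H^2(\D)$ reduces $H_\varphi$'' to ``$\clq_u$ reduces $H_\varphi$,'' which is what licenses the block-diagonal writing. This rests only on the elementary fact that the orthogonal complement of a reducing subspace is again reducing, but it is the step where the argument could silently go wrong if one conflated mere invariance with reduction.
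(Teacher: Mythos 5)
Your proposal is correct and follows essentially the same route as the paper: apply Theorem \ref{thm: lift 1} to get the block normal form, read off $u H^2(\D) \subseteq \ker H_\varphi \cap \ker H_\varphi^*$, and translate back and forth via Corollary \ref{reducing}. Your explicit disposal of the constant-$u$ case is a small refinement the paper omits (its Corollary \ref{reducing} is stated only for nonconstant $u$, while Theorem \ref{X phi} allows any inner $u$), but otherwise the arguments coincide.
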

\begin{proof}
Suppose $S_u^*X=XS_u$ for some $X \in \clb(\clq_u)$. By Theorem \ref{thm: lift 1}, there exists $\varphi\in L^{\infty}(\mathbb{T})$ such that
\[
H_{\varphi}=\begin{bmatrix}
X&0
\\
0&0
\end{bmatrix}.
\]
In particular, $\clq_u^\perp \subseteq \ker H_\varphi \cap \ker H^*_\varphi$. Corollary \ref{reducing} then implies that
\[
P_+\varphi \in \big(\gcd\{u,\overline{Ju}\}H^2\big)^{\perp}.
\]
It is also evident that $X=H_\varphi|_{\clq_u}$. For the converse direction, by Corollary \ref{reducing} we again have that $\clq_u^\perp \subseteq \ker H_\varphi \cap \ker H^*_\varphi$. Thus the matrix representation of $H_\varphi$ with respect to the decomposition $H^2 = \clq_u \oplus \clq_u^\perp$ is
\[
H_\vp = \begin{bmatrix}
H_\varphi|_{\clq_u} & 0
\\
0 & 0
\end{bmatrix}.
\]
Then the rest of the proof follows from Theorem \ref{thm: lift 1} with $X=H_\varphi|_{\clq_u}$.
\end{proof}

We now shift our focus to the issue of finding a nonzero solution to the Hankel-type intertwiner problem on model spaces. More specifically, in the following, we present the proof of Theorem \ref{thm: nonzero lift}.

\bigskip

\noindent \textsf{Proof of Theorem \ref{thm: nonzero lift}:} Suppose $S_u^*X=XS_u$ for some nonzero $X\in\clb(\clq_u)$. By Theorem \ref{X phi}, there exists $\varphi\in L^\infty(\mathbb{T})$ such that $X=H_\varphi|_{\clq_u}$ and $P_+\varphi\in(\gcd\{u, \overline{Ju}\}H^2)^\perp$.
If $\gcd\{u, \overline{Ju}\} = 1$, then $(\gcd\{u, \overline{Ju}\}H^2)^\perp = \{0\}$ and hence
\[
P_+\varphi = 0.
\]
Then $H_\vp = 0$ and consequently $X=H_\varphi|_{\clq_u}=0$ - a contradiction to our assumption that $X\ne 0$. Conversely, suppose
\[
\theta:= \gcd\{u, \overline{Ju}\} \neq 1.
\]
Define
\begin{equation}\label{eqn: nontrivial phi}
\varphi: = T_z^*\theta.
\end{equation}
Since $\ker T_z^* = \mathbb{C}$, it follows that $\varphi\neq 0$. Moreover, by the definition of $\theta$, we have $T_z^* \theta \in \clq_u$ and also $T_z^* \theta \in \clq_{\overline{J u}}$, and consequently
\[
T_z^* \theta \in (\gcd\{u, \overline{Ju}\}H^2)^\perp.
\]
Since $T_z^* \theta = \frac{\theta - \theta(0)}{z}$, for each $w\in\mathbb{T}$, we have
\[
|\varphi(w)| = \Big|\frac{\theta(w)-\theta(0)}{w}\Big| \le |\theta(w)|+|\theta(0)|.
\]
Therefore
\[
|\varphi(w)| \leq 1+|\theta(0)|,
\]
a.e. on $\mathbb{T}$, which implies $\varphi\in L^\infty(\mathbb{T})$. Then, from $\varphi \in H^2$, we conclude that
\[
P_+\varphi = \varphi = T_z^*\theta \in (\gcd\{u, \overline{Ju}\} H^2)^\perp.
\]
Define
\[
X:=H_\varphi|_{\clq_u}.
\]
By Theorem \ref{X phi}, $S_u^*X=XS_u$, and hence it only remains to prove that $X\ne 0$. In view of $H^2=uH^2\oplus \clq_u$, write
\[
1=uh+g,
\]
for some $h \in H^2$ and $g \in \clq_u$. By Corollary \ref{reducing}, in particular, we have $uH^2\subseteq \ker H_\varphi$. Then
\[
X g = H_\varphi 1 = P_+\varphi = \varphi\neq 0,
\]
proves that $X \neq 0$. This completes the proof of Theorem \ref{thm: nonzero lift}. \qed

\bigskip

It is important to observe, in view of \eqref{eqn: nontrivial phi}, that $X:= H_\vp|_{\clq_u}$ is a nonzero solution to the operator equation $S_u^*X=XS_u$, where
\[
\vp = T_z^* \gcd\{u, \overline{Ju}\}.
\]
Clearly, there are alternative solutions $X$ to the operator equation $S_u^*X=XS_u$. Suppose
\[
\theta = \gcd\{u, \overline{Ju}\} \neq 1.
\]
Assume that $\dim \clq_\theta = 1$. Then, all the nonzero intetwiners $X$ are given by
\[
X = H_{\alpha T_z^* \theta}|_{\clq_u},
\]
where $\alpha \in \mathbb{C}$. Next, assume that
\[
\dim \clq_\theta >1.
\]
For $j=1,2$, set
\[
\vp_j = T_z^{*j} \theta,
\]
and define
\[
X_j = H_{\vp_j}|_{\clq_u}.
\]
We claim that $X_1 \neq X_2$ are nonzero solutions to $S_u^*X=XS_u$. To see this, as in the proof of the above theorem, we write
\[
1=uh+g,
\]
for some $h \in H^2$ and $g \in \clq_u$. It is now easy to see that
\[
X_j g = H_{\vp_j} 1 = P_+ \vp_j = \vp_j,
\]
for all $j=1,2$. Since $\vp_1$ and $\vp_2$ are nonzero distinct functions, we conclude that $X_1$ and $X_2$ are nonzero distinct intertwiners.

Now we focus on model spaces corresponding to Blaschke products. Given each $\alpha \in \D$, define the \textit{Blaschke factor} $b_\alpha$ by
\[
b_{\alpha}(z) = \frac{\alpha - z}{1-\bar{\alpha} z} \qquad (z \in \D).
\]
Let $\Lambda \subseteq \mathbb{N}$ be a finite or countably infinite set. Pick a set of complex numbers $\{\alpha_n\}_{n \in \Lambda} \subseteq \D$ (repetition is allowed). Assume the Blaschke condition that
\[
\sum_{n \in \Lambda} (1 - |\alpha_n|) < \infty.
\]
Consider the Blaschke product
\[
u = \prod_{n \in \Lambda} b_{\alpha_n}.
\]
Then
\[
\overline{J u} = \prod_{n \in \Lambda} b_{\bar{\alpha}_n},
\]
and consequently $\gcd\{u, \overline{Ju}\}\ne 1$ if and only if there exist $p, q \in \Lambda$ such that $\bar{\alpha}_p = \alpha_q$. This proves the following corollary to the above theorem:

\begin{corollary}\label{roots}
Given a Blaschke product $u = \displaystyle \prod_{n \in \Lambda} b_{\alpha_n}$ as above, there exists a nonzero $X \in \clb(\clq_u)$ such that $S_u^*X=XS_u$ if and only if there exist $p, q \in \Lambda$ such that
\[
\bar{\alpha}_p = \alpha_q.
\]
\end{corollary}

We remark that if $\Lambda$ is a finite set, then $u$ becomes a finite Blaschke product that corresponds to finite-dimensional model spaces.
 

It is now clear that one can construct concrete examples of model spaces that admit or do not admit nonzero intertwiners. We present two extreme examples of this claim. First, consider the inner function
\[
u(z) = b_{\frac{i}{2}}(z) \exp\Big(\frac{z+1}{z-1}\Big),
\]
for all $z \in \D$. Note that second factor is a singular inner function. In this case, it is evident that
\[
\gcd\{u, \overline{Ju}\} = \exp\Big(\frac{z+1}{z-1}\Big) \neq 1.
\]
Therefore, $X:= H_\vp|_{\clq_u}$ on $\clq_u$ yields a nonzero solution to $S_u^*X=XS_u$, where
\[
\vp = T_z^* \exp\Big(\frac{z+1}{z-1}\Big).
\]
On the other hand, if $u = b_{\frac{i}{2}}$, then $\gcd\{u, \overline{Ju}\} = 1$, and consequently there is no nonzero solution to the equation $S_u^*X=XS_u$.

Next, we shift our focus to another question:

\begin{question}
Fix a symbol $\vp \in L^\infty(\T)$. Is there a model space $\clq_u$ and map $X \in \clb(\clq_u)$ such that $S_u^* X = X S_u$ lifts to $H_\vp$?
\end{question}

To answer this, we retrieve the Hankel operator on $H^2$ once more, which corresponds to Hilbert's Hankel matrix as stated in Corollary \ref{cor Hilbert matrix}.

\begin{example}\label{exam: Hilbert matrix}
Let $\clq_u$ be a nontrivial model space. Denote by $\psi \in L^\infty(\mathbb{T})$ the symbol that corresponds to Hilbert's Hankel matrix. We know from Corollary \ref{cor Hilbert matrix} that the model spaces do not reduce $H_\psi$. This implies that if $S_u^* X = X S_u$ for some $X \in \clb(\clq_u)$ and $X = H_\vp|_{\clq_u}$ for some $\vp \in L^\infty(\T)$, then
\[
H_\vp \neq H_\psi.
\]
\end{example}

As a result, none of the intertwiners between a model operator and its adjoint admits a lift to Hilbert's Hankel matrix.

\section{Concluding Remarks}\label{sec: remark}

The goal of this final section is to outline potential directions for refining the results obtained so far in this paper, as well as to demonstrate the triviality of further improvement. We will also talk about inner functions connected to the kernels of Hankel operators and how they relate to the Beurling-type invariant subspaces of Hankel operators.

\subsection{Toeplitz formulation} The formulation of the lifting problem on model spaces presented in this paper can be applied to a wide range of problems. Although interesting to study (and have been in many cases), similar questions in other contexts may not always have pleasing answers. In the following, we discuss one such situation that becomes trivial. Recall that $T_\vp$ denotes the Toeplitz operator with symbol $\vp \in L^\infty(\T)$, where
\[
T_\vp = P_{+} L_\varphi|_{H^2}.
\]

Recall the algebraic classification of Toeplitz operators (see Brown and Halmos \cite{BH}): $X\in \clb(H^2)$ is a Toeplitz operator if and only if $T_z^* X T_z = X$. We are specifically interested in the model space variant of this classification. 

\begin{proposition}\label{toeplitz}
Let $\clq_u \subseteq H^2$ be a model space, and let $X\in\clb(\clq_u)$. Then
\[
S_u^*XS_u=X,
\]
if and only if
\[
X=0.
\]
\end{proposition}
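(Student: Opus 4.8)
The plan is to reduce the statement to the Brown--Halmos characterization of Toeplitz operators recalled just above, using the same compression/dilation bookkeeping as in Lemma~\ref{prop1}. The forward implication $X=0 \Rightarrow S_z^* X S_z = 0 = X$ is immediate, so all the content lies in the converse, and I will argue it by producing a Toeplitz operator and then showing its symbol must vanish.

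First I would pass from $X$ on $\clq$ to the operator $A := i_{\clq} X i_{\clq}^* \in \clb(H^2(\D))$, and record the two intertwining identities
\[
i_{\clq}^* T_z = S_z\, i_{\clq}^*, \qquad T_z^* i_{\clq} = i_{\clq}\, S_z^*,
\]
the first of which follows from $i_{\clq} i_{\clq}^* = P_{\clq}$ together with \eqref{eqn: PQ Tz}, and the second by taking adjoints. With these in hand a direct computation gives
\[
T_z^* A T_z = (T_z^* i_{\clq})\, X\, (i_{\clq}^* T_z) = i_{\clq}\, S_z^* X S_z\, i_{\clq}^*,
\]
so that the hypothesis $S_z^* X S_z = X$ becomes exactly $T_z^* A T_z = A$. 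By the Brown--Halmos theorem \cite{BH} there is then a symbol $\psi \in L^\infty(\T)$ with $A = T_\psi$.

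Next I would exploit the block structure of $A$. By the same computation as in \eqref{eqn: iQ X}, the operator $A = i_{\clq} X i_{\clq}^*$ has the form $\begin{bmatrix} X & 0 \\ 0 & 0 \end{bmatrix}$ with respect to $H^2(\D) = \clq \oplus \clq^\perp$; in particular $T_\psi$ annihilates $\clq^\perp$. Writing $\clq = \clq_u$ for a nonconstant inner $u$ (the case $\clq = \{0\}$ being vacuous, as then $X=0$ automatically), this says $T_\psi(u z^k) = P_+(\psi u z^k) = 0$ for every $k \in \Z_+$. Reading off Fourier coefficients, $P_+(\psi u z^k)=0$ forces $\widehat{\psi u}(m) = 0$ for all $m \ge -k$; letting $k \to \infty$ kills every Fourier coefficient of $\psi u$, so $\psi u = 0$ a.e. Since $|u| = 1$ a.e., this yields $\psi = 0$, whence $A = T_\psi = 0$ and finally $X = 0$.

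The operator bookkeeping of the first step is routine; the one place to be careful is the last step, where I must convert ``$T_\psi$ kills the Beurling-type subspace $u H^2(\D)$'' into ``$\psi = 0$.'' The Fourier computation above does this cleanly and self-containedly. Alternatively, since $A = \begin{bmatrix} X & 0 \\ 0 & 0\end{bmatrix}$ also has range inside $\clq$, both $\ker T_\psi$ and $\ker T_\psi^* = \ker T_{\bar\psi}$ contain the nonzero subspace $u H^2(\D)$, and Coburn's lemma then immediately forces $\psi = 0$. I expect this final passage---excluding a nonzero Toeplitz symbol whose operator vanishes on a Beurling-type subspace---to be the only genuine obstacle, everything else being formal manipulation with $i_{\clq}$.
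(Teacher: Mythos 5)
Your proof is correct and follows the same route as the paper's up to and including the Brown--Halmos step: compress via $i_{\clq}$, check that the hypothesis becomes $T_z^* A T_z = A$ for $A = i_{\clq} X i_{\clq}^*$, conclude $A = T_\psi$, and read off from the block form that $T_\psi$ annihilates $\clq^\perp = u H^2(\D)$. The only divergence is the final step: the paper observes that $\clq^\perp \subseteq \ker T_\vp \cap \ker T_\vp^*$ and invokes Coburn's lemma, whereas your primary argument uses only the single containment $uH^2(\D) \subseteq \ker T_\psi$ and kills the symbol by an elementary Fourier-coefficient computation ($P_+(\psi u z^k)=0$ for all $k$ forces all Fourier coefficients of $\psi u$ to vanish, and $|u|=1$ a.e.\ then gives $\psi = 0$). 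Your version is self-contained and in fact establishes the slightly sharper fact that the kernel of a nonzero Toeplitz operator cannot contain a Beurling-type subspace, while Coburn's lemma buys brevity at the cost of an external citation; you also note the Coburn route as an alternative, which is precisely the paper's argument. Your parenthetical handling of the degenerate case ($\clq = \{0\}$ when the inner function is constant) is consistent with the paper's implicit assumption that $\clq^\perp$ is a nontrivial Beurling-type subspace.
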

\begin{proof}
We follow the notations from Section \ref{sec: lifting thm}. Therefore, $S_u = i_{\clq_u}^* T_z i_{\clq_u}$, and hence $S_u^* X S_u = X$ implies
\[
i_{\clq_u}^* T_z^* i_{\clq_u} X i_{\clq_u}^* T_z i_{\clq_u} = X,
\]
which gives
\[
(P_{\clq_u} T_z^* i_{\clq_u}) X (i_{\clq_u}^* T_z P_{\clq_u}) = i_{\clq_u} X i_{\clq_u}^*.
\]
Using the identity $P_{\clq_u} T_z^* i_{\clq_u} = T_z^* i_{\clq_u}$ from \eqref{eqn: PQ Tz}, we see that
\[
T_z^* (i_{\clq_u} X i_{\clq_u}^*) T_z = i_{\clq_u} X i_{\clq_u}^*,
\]
and then the Brown and Halmos classification of Toeplitz operators implies $i_{\clq_u} X i_{\clq_u}^* = T_\vp$ for some $\vp \in L^\infty(\T)$. Finally, recall from \eqref{eqn: iQ X} that
\[
i_{\clq_u} X i_{\clq_u}^* = \begin{bmatrix}
X&0\\
0&0
\end{bmatrix},
\]
and subsequently
\[
T_\vp = \begin{bmatrix}
X&0\\
0&0
\end{bmatrix}.
\]
In particular, $\clq_u^\perp \subseteq \ker T_\varphi \cap \ker T_\varphi^*$. On the other hand, by Coburn's lemma on Toeplitz operators \cite[Theorem 4.1]{Coburn}, if $\vp \neq 0$, then we have
\[
\ker T_\varphi \cap \ker T_\varphi^* = \{0\},
\]
yielding that $\clq_u^\perp = \{0\}$. This contradicts the fact that $\clq_u^\perp$, which is a Beurling-type subspace, cannot be trivial. Therefore, $T_\vp = 0$ and hence $X = 0$. This completes the proof.
\end{proof}

Therefore, the Toeplitz operator problem in the setting of model spaces has only a trivial solution.

\subsection{Dilation formulation} The aim here is to discuss some possible generalizations of the results of this paper with the aid of dilation theory. While it may not be entirely evident if all the results accept higher generalizations, like in the context of vector-valued Hankel operators, some of the results admit immediate modifications. For example, we can easily convert Theorem \ref{thm: lift 1} into the terminology of isometric dilation (see \cite{Foias} for all the terms).

Let $T$ acting on a Hilbert space $\clh$ be a contraction, and let $V$ on a Hilbert space $\clk (\supseteq \clh)$ be the minimal isometric dilation of $T$. A bounded linear operator $X: \clk \raro \clk$ is said to be $V$-Hankel if
\[
V^* X = X V.
\]
With a little more work, the same line of argument will lead to the general statement of Theorem \ref{thm: lift 1}, as follows:

\begin{proposition}
Let $X: \clh \raro \clh$ be a bounded linear operator. Then $T^* X=X T$ if and only if there exists a $V$-Hankel operator $Y$ on $\clk$ such that
\[
Y=\begin{bmatrix} X &0\\
0& 0\end{bmatrix},
\]
with respect to the decomposition $\clk=\clh\oplus (\clk\ominus \clh)$. In particular, in the above case, we have the following:
\[
X= Y|_{\clh}.
\]
\end{proposition}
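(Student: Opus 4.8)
The plan is to run the abstract version of the computation behind Theorem \ref{thm: lift 1}, transplanting Lemmas \ref{prop1} and \ref{coro1} from the model space $\clq$ (with embedding $i_\clq$ and dilation $T_z$) to the Hilbert space $\clh$ (with embedding $i_{\clh}: \clh \hookrightarrow \clk$ and dilation $V$). The two structural facts that powered the concrete proof were $T_z^*\clq \subseteq \clq$ (the identity \eqref{eqn: PQ Tz}) and the $T_z$-invariance of $\clq^\perp$. Their abstract substitute is the standard property of the minimal isometric dilation that $\clh$ is co-invariant under $V$, i.e. $V^*\clh \subseteq \clh$ with $V^*|_\clh = T^*$ (equivalently, $\clk \ominus \clh$ is $V$-invariant); see \cite{Foias, NF}. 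Recording this as the operator identities $V^* i_{\clh} = i_{\clh} T^*$ and $i_{\clh}^* V P_\clh = i_{\clh}^* V$, and combining them with $T = i_{\clh}^* V i_{\clh}$, $i_{\clh} i_{\clh}^* = P_\clh$ and $i_{\clh}^* i_{\clh} = I_\clh$, I will have at my disposal exactly the relations used in Section \ref{sec: lifting thm}.

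For the forward direction I would assume $T^* X = X T$ and show that $Y := i_{\clh} X i_{\clh}^*$ satisfies $V^* Y = Y V$, so that $Y$ is $V$-Hankel by definition. Writing $T^* X = X T$ as $i_{\clh}^* V^* i_{\clh} X = X i_{\clh}^* V i_{\clh}$, multiplying on the left by $i_{\clh}$, and using $i_{\clh} i_{\clh}^* = P_\clh$ together with $P_\clh V^* i_{\clh} = V^* i_{\clh}$, gives $V^* i_{\clh} X = Y V i_{\clh}$; multiplying on the right by $i_{\clh}^*$ and using $i_{\clh}^* V P_\clh = i_{\clh}^* V$ then yields $V^* Y = Y V$, just as in Lemma \ref{prop1}. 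The point worth stressing is that here a $V$-Hankel operator is \emph{defined} by the intertwining relation, so no analogue of Nehari's theorem \eqref{eqn: Hankel class} is needed; this is precisely where the abstract argument is shorter than the concrete one. The matrix form is then pure bookkeeping: writing $i_{\clh} = \left[\begin{smallmatrix} I_\clh \\ 0 \end{smallmatrix}\right]$ with respect to $\clk = \clh \oplus (\clk \ominus \clh)$ gives $Y = i_{\clh} X i_{\clh}^* = \left[\begin{smallmatrix} X & 0 \\ 0 & 0 \end{smallmatrix}\right]$, verbatim as in the proof of Lemma \ref{coro1}, and $X = Y|_\clh$ is immediate from this.

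For the converse I would assume $Y = \left[\begin{smallmatrix} X & 0 \\ 0 & 0 \end{smallmatrix}\right]$ is $V$-Hankel, which by Lemma \ref{coro1} is the same as $i_{\clh} X i_{\clh}^* = Y$ with $V^* Y = Y V$, and then compute
\[
i_{\clh} (T^* X - X T) i_{\clh}^* = (i_{\clh} T^*) X i_{\clh}^* - i_{\clh} X (T i_{\clh}^*) = V^* Y - Y V = 0,
\]
where I used the two forms of co-invariance $i_{\clh} T^* = V^* i_{\clh}$ and $T i_{\clh}^* = i_{\clh}^* V$ and the identity $i_{\clh} X i_{\clh}^* = Y$. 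Since $i_{\clh}$ is an isometry, applying $i_{\clh}^*$ and $i_{\clh}$ to the two sides recovers $T^* X - X T = 0$, exactly mirroring the isometry argument at the end of Lemma \ref{prop1}.

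The only genuine obstacle is the very first step: establishing that the minimal isometric dilation leaves $\clh$ co-invariant with $V^*|_\clh = T^*$. This is the abstract counterpart of the concrete model-space facts invoked in Section \ref{sec: lifting thm}, and it can either be quoted from the Sz.-Nagy--Foias theory or checked directly on Sch\"affer's matrix model of $V$, in which $\clh$ is the top coordinate and $V^*$ visibly maps it into itself via $T^*$. Once this is secured, everything else is a formal transcription of the proofs of Lemmas \ref{prop1} and \ref{coro1} with $(T_z, \clq, i_\clq)$ replaced by $(V, \clh, i_{\clh})$, and minimality enters only insofar as it guarantees the co-invariance of $\clh$.
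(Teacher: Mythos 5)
Your proposal is correct and is precisely the argument the paper intends: the paper offers no written proof of this proposition, saying only that "the same line of argument" as in Theorem \ref{thm: lift 1} applies, and your transcription of Lemmas \ref{prop1} and \ref{coro1} with $(T_z,\clq,i_\clq)$ replaced by $(V,\clh,i_\clh)$, anchored by the co-invariance identities $V^* i_\clh = i_\clh T^*$ and $i_\clh^* V P_\clh = i_\clh^* V$, is exactly that line of argument carried out. You also correctly identify the one place where the abstract setting is easier (no Nehari-type theorem is needed since $V$-Hankel is defined by the intertwining relation) and the one fact that must be imported from dilation theory (co-invariance of $\clh$ under the minimal isometric dilation).
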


It is, however, unclear how to obtain a concrete representation of the $V$-Hankel operator $Y$.

\subsection{Hankel kernels} Let $\vp \in L^\infty(\T)$, and suppose that $\ker H_\vp \neq \{0\}$. We know by the Hankel-condition \eqref{eqn: Hankel class}, there exists an inner function $w_\vp \in H^\infty(\D)$ such that $\ker H_\vp$ is a nonzero shift invariant subspace of $H^2$, namely $w_\vp H^2$. That is
\[
\ker H_\vp = w_\vp H^2.
\]
Consider a nonconstant inner function $u \in H^\infty(\D)$. We recall from Theorem \ref{invariant} that $u H^2$ is invariant under $H_\varphi$ if and only if $u H^2 \subseteq \ker H_\varphi$. Therefore, we are in the following situation:
\[
u H^2 \subseteq \ker H_\varphi = w_\vp H^2.
\]
It is now evident that the above condition is equivalent to the existence of an inner function $v \in H^\infty(\D)$ such that
\[
u = w_\vp v.
\]
This proves the following characterization of Beurling-type invariant subspaces of Hankel operators:

\begin{theorem}
Let $\vp \in L^\infty(\T)$, and suppose
\[
\ker H_\vp = w_\vp H^2,
\]
for some inner function $w_\vp \in H^\infty(\D)$. If $u \in H^\infty(\D)$ is an inner function, then $u H^2$ is invariant under $H_\varphi$ if and only if there exists an inner function
$v \in H^\infty(\D)$ such that
\[
u = w_\vp v.
\]
\end{theorem}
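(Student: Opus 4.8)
The plan is to reduce this characterization entirely to two facts already established in the excerpt: the equivalence from Theorem~\ref{invariant} that $u H^2(\D)$ is invariant under $H_\vp$ if and only if $u H^2(\D) \subseteq \ker H_\vp$, and the standing hypothesis that $\ker H_\vp = w_\vp H^2(\D)$. Granting these, the entire statement collapses to the purely order-theoretic claim that, for inner functions $u$ and $w_\vp$, one has the containment $u H^2(\D) \subseteq w_\vp H^2(\D)$ if and only if $w_\vp$ divides $u$, i.e. $u = w_\vp v$ for some inner $v \in H^\infty(\D)$. So the real content is a statement about the divisibility order on Beurling-type subspaces, and the operator-theoretic input is entirely absorbed by the two cited results.

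First I would invoke Theorem~\ref{invariant} to replace the invariance condition by the inclusion $u H^2(\D) \subseteq \ker H_\vp$, and then substitute $\ker H_\vp = w_\vp H^2(\D)$ to arrive at $u H^2(\D) \subseteq w_\vp H^2(\D)$. From here the argument is standard Beurling-theory bookkeeping. For the reverse direction, if $u = w_\vp v$ with $v$ inner, then any element of $u H^2(\D)$ has the form $w_\vp v h = w_\vp(vh)$ with $vh \in H^2(\D)$, giving the inclusion immediately. For the forward direction, from $u H^2(\D) \subseteq w_\vp H^2(\D)$ I would apply the inclusion to the constant function $1 \in H^2(\D)$, which shows $u \in w_\vp H^2(\D)$, so $u = w_\vp v$ for some $v \in H^2(\D)$; it then remains to argue that this $v$ is in fact inner. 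This follows because $u$ and $w_\vp$ are both inner, so on $\T$ we have $|v| = |u|/|w_\vp| = 1$ a.e., and since $v \in H^2(\D)$ has unimodular boundary values it is an inner function in $H^\infty(\D)$.

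I do not expect any serious obstacle here; the statement is essentially a clean corollary assembled from Theorem~\ref{invariant}, the structural hypothesis on $\ker H_\vp$, and elementary divisibility in the lattice of inner functions. If anything deserves care, it is the verification that the quotient $v = u/w_\vp$ is genuinely inner rather than merely an $H^2(\D)$ function, but this is the routine boundary-modulus computation just described and uses only that both $u$ and $w_\vp$ are inner. One may also note that the Hankel property \eqref{eqn: Hankel class}, which forces $\ker H_\vp$ to be $T_z$-invariant and hence Beurling-type, is precisely what guarantees the existence of the inner function $w_\vp$ in the first place, so the hypothesis $\ker H_\vp = w_\vp H^2(\D)$ is automatically available whenever $\ker H_\vp \neq \{0\}$.
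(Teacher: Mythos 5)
Your proposal is correct and follows essentially the same route as the paper: invoke Theorem~\ref{invariant} to replace invariance of $uH^2(\D)$ by the inclusion $uH^2(\D) \subseteq \ker H_\vp = w_\vp H^2(\D)$, and then observe that containment of Beurling-type subspaces is equivalent to divisibility of the corresponding inner functions. The paper labels that last equivalence as ``evident,'' whereas you spell out the routine verification (evaluate at $1$ and check the quotient has unimodular boundary values); the content is the same.
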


In other words, all inner multiples of $w_\vp$ implement Beurling-type invariant subspaces of $H_\vp$, where $w_\vp$ is the Beurling inner function of $\ker H_\vp$ (provided the kernel space is nonzero).

The (well-known) clarification above has shown us that Hankel operators yield inner functions through their kernel spaces (as long as they are nonzero). Let us complete this (well-known) link of inner functions with the kernels of Hankel operators before we call the paper to an end. For any inner function $u \in H^\infty(\D)$, there is always a symbol $\vp \in L^\infty(\T)$ such that
\[
\ker H_\vp = u H^2.
\]
This fact can be found in \cite[page 15, Theorem 2.4]{Peller}, particularly in the setting of Hankel operators from $H^2$ to $(H^2)^\perp$. For the current setting, simply define the symbol
\[
\vp = \bar{z} \overline{J u}.
\]
In this case, one sees that
\[
\ker H_{\bar{z} \overline{J u}} = u H^2.
\]
We present an outline of the proof, as it is simplified in this setting. First, we note that $\overline{(J u)(z)} (Ju)(z) = 1$ for a.e. $z \in \T$. For all $f \in H^2$, we have
\[
\begin{split}
H_{\bar{z} \overline{J u}} uf & = P_+ (\bar{z} \overline{J u} Ju Jf)
\\
& = P_+(\bar{z} Jf)
\\
& = 0,
\end{split}
\]
and hence $uH^2 \subseteq \ker H_{\bar{z} \overline{J u}}$. For the reverse inclusion, we now appeal to Beurling to get an inner function $v \in H^\infty(\D)$ such that
\[
\ker H_{\bar{z} \overline{J u}} = v H^2.
\]
We already know that $uH^2 \subseteq v H^2$. Now $v = v \times 1 \in v H^2$ implies that $H_{\bar{z} \overline{J u}} v = 0$, and hence
\[
P_+ (\bar{z} \overline{J u}) J v = 0.
\]
Again, in view of $uH^2 \subseteq v H^2$, there exists an inner function $w \in H^\infty(\D)$ such that
\[
u = v w.
\]
This implies $\overline{Ju} = \overline{Jv}\; \overline{Jw}$, and then the above identity yields
\[
P_+ (\bar{z}  \overline{Jw}) = 0.
\]
On the other hand, writing
\[
w = \sum_{n=0}^{\infty} \alpha_n z^n,
\]
we find, on $\T$, that
\[
\bar{z} \overline{Jw} = \bar{\alpha}_0 \bar{z} + \sum_{n=0}^{\infty} \bar{\alpha}_{n+1}z^n.
\]
Consequently, $P_+ (\bar{z}  \overline{Jw}) = 0$ forces that
\[
\alpha_n = 0,
\]
for all $n \geq 1$, and hence, $w(z) = \alpha_0$ for all $z \in \D$. This shows that
\[
u = \alpha_0 v,
\]
and $\alpha_0 \in \T$. This proves that $\ker H_{\bar{z} \overline{J u}} = u H^2$.

As presented in this paper, the principle of lifting and the existence of Beurling-type invariant and reducing subspaces rely heavily on the kernel spaces of Hankel operators. The relationship between inner functions and the nontrivial kernel spaces of Hankel operators suggests that the theory described here encompasses a wide range of functions, spaces, and operators.

\vspace{0.1in} \noindent\textbf{Conflict of interest:} The author states that there is no conflict of interest. No data sets were generated or analyzed during the current study.

\vspace{0.1in} \noindent\textbf{Acknowledgement:} We thank the referees for their constructive feedback, which also helped in improving the presentation of the paper. The research of the first named author is supported by NBHM (National Board of Higher Mathematics, India) Ph.D. fellowship No. 0203/13(47)/2021-R\&D-II/13177. The research of the third named author is supported by NBHM (National Board of Higher Mathematics, India) Post-doctoral fellowship No. 0204/27/(26)/2023/R\&D-II/11926. The research of the fourth named author is supported in part by TARE (TAR/2022/000063) by SERB, Department of Science \& Technology (DST), Government of India.

\bibliographystyle{amsplain}

\end{document}